\documentclass{amsart}
\usepackage{amsmath}
\usepackage{amssymb}
\usepackage{amsthm}
\begin{document}
\def\eq#1{{\rm(\ref{#1})}}
\theoremstyle{plain}
\newtheorem{thm}{Theorem}[section]
\newtheorem{lem}[thm]{Lemma}
\newtheorem{prop}[thm]{Proposition}
\newtheorem{cor}[thm]{Corollary}
\theoremstyle{definition}
\newtheorem{dfn}[thm]{Definition}
\newtheorem{rem}[thm]{Remark}
\def\Ker{\mathop{\rm Ker}}
\def\Coker{\mathop{\rm Coker}}
\def\ind{\mathop{\rm ind}}
\def\Re{\mathop{\rm Re}}
\def\vol{\mathop{\rm vol}}
\def\SO{\mathbin{\rm SO}}
\def\Im{\mathop{\rm Im}}
\def\min{\mathop{\rm min}}
\def\Spec{\mathop{\rm Spec}\nolimits}
\def\Hol{{\textstyle\mathop{\rm Hol}}}
\def\ge{\geqslant}
\def\le{\leqslant}
\def\Z{{\mathbin{\mathbb Z}}}
\def\R{{\mathbin{\mathbb R}}}
\def\N{{\mathbin{\mathbb N}}}
\def\E{{\mathbb E}}
\def\V{{\mathbb V}}
\def\al{\alpha}
\def\be{\beta}
\def\ga{\gamma}
\def\de{\delta}
\def\ep{\epsilon}
\def\io{\iota}
\def\ka{\kappa}
\def\la{\lambda}
\def\ze{\zeta}
\def\th{\theta}
\def\vp{\varphi}
\def\si{\sigma}
\def\up{\upsilon}
\def\om{\omega}
\def\De{\Delta}
\def\Ga{\Gamma}
\def\Th{\Theta}
\def\La{\Lambda}
\def\Om{\Omega}
\def\ts{\textstyle}
\def\sst{\scriptscriptstyle}
\def\sm{\setminus}
\def\op{\oplus}
\def\ot{\otimes}
\def\bigop{\bigoplus}
\def\iy{\infty}
\def\ra{\rightarrow}
\def\longra{\longrightarrow}
\def\dashra{\dashrightarrow}
\def\t{\times}
\def\w{\wedge}
\def\d{{\rm d}}
\def\bs{\boldsymbol}
\def\ci{\circ}
\def\ti{\tilde}
\def\ov{\overline}
\def\md#1{\vert #1 \vert}
\def\nm#1{\Vert #1 \Vert}
\def\bmd#1{\big\vert #1 \big\vert}
\def\cnm#1#2{\Vert #1 \Vert_{C^{#2}}} 
\def\lnm#1#2{\Vert #1 \Vert_{L^{#2}}} 
\def\bnm#1{\bigl\Vert #1 \bigr\Vert}
\def\bcnm#1#2{\bigl\Vert #1 \bigr\Vert_{C^{#2}}} 
\def\blnm#1#2{\bigl\Vert #1 \bigr\Vert_{L^{#2}}} 
\title[Deformations of HL and RS manifolds]{Deformations of Lagrangian Type Submanifolds inside $G_2$ manifolds}
\author{Rebecca Glover and Sema Salur}
\dedicatory{Dedicated to the memory of Ruth I. Michler}
\thanks{S.Salur is partially supported by NSF grant 1105663}
\keywords{calibrations, manifolds with special holonomy}
\address{Department  of Mathematics, University of Rochester, Rochester, NY, 14627}
\email{rglover3@ur.rochester.edu }
\address {Department of Mathematics, University of Rochester, Rochester, NY, 14627 }
\email{sema.salur@rochester.edu } \subjclass{53C38,  53C29, 57R57}
\date{\today}

\begin{abstract} 3-dimensional Harvey Lawson submanifolds were introduced in an earlier paper by Akbulut-Salur, \cite{AS1} as examples of Lagrangian-type manifolds inside $G_2$ manifold. In this paper, we first show that the space of deformations of a smooth, compact, orientable Harvey-Lawson submanifold $HL$ in a $G_2$ manifold $M$ can be identified with the direct sum of the space of smooth functions and closed 2-forms on $HL$. We then introduce a new class of Lagrangian-type 4-dimensional submanifolds inside $G_2$, call them RS submanifolds and prove that the space of deformations of a smooth, compact, orientable $RS$ submanifold in a $G_2$ manifold $M$ can be identified with closed 3-forms on $RS$. 
\end{abstract}
\date{}
\maketitle
\section{Introduction}

It is well-known that a smooth symplectic manifold $N^{2n}$ is equipped with a closed, nondegenerate differential 2-form $\omega$.  An $n$-dimensional submanifold $L^n$ of $N^{2n}$ is called Lagrangian if the restriction of $\omega$ to $L$ is zero. Lagrangian submanifolds and their deformations have important applications in symplectic geometry and mathematical physics. In particular, they play a role in establishing the correspondence between ``Calabi-Yau mirror pairs'' in string theory via the Fukaya category.  

\vspace{.1in} 

Let $(M,\varphi)$ be a $G_2$ manifold with calibration 3-form $\varphi$.  As with the symplectic 2-form $\omega$, the calibration 3-form $\varphi$ on a $G_2$ manifold is closed and nondegenerate.  Moreover, the Hodge dual, $\star\varphi$, of $\varphi$ is also closed and nondegenerate. Therefore $G_2$ manifolds provide a natural setting in which one can search for Lagrangian-type submanifolds that correspond to $\varphi$ and $\star\varphi$.  We call them Harvey-Lawson (HL)  and RS manifolds, respectively.  For more on the definition and properties of HL manifolds we refer the reader to \cite {AS1}.  One can study these submanifolds and their deformation spaces and moreover, search for relations by using Fukaya categories.  Through these results, we hope to use the relationship between Calabi-Yau $3$-folds and $G_2$ manifolds to answer questions about the mirror symmetry of Calabi-Yau $3$-folds and existence of calibrated submanifolds, \cite{AS2},  \cite{AS3}, \cite{AS4}, \cite{bryant2}, \cite{RS2}, \cite{salur1}.  This will be our motivation for the future directions, \cite{YS}, \cite{RS1}.

\vspace{.1in}

In this paper we define these Lagrangian-type submanifolds of a $G_2$ manifold $M$.  We then describe some properties of these objects and prove the following theorems.

\begin{thm} The space of infinitesimal deformations of a smooth, compact,
orientable 3-dimensional Harvey-Lawson submanifold $HL$ in a $G_2$
manifold $M$ within the class of $HL$ submanifolds is infinite-dimensional. The deformation space can be identified with the direct sum of the spaces of smooth functions and closed differential 2-forms on $HL$.
\end{thm}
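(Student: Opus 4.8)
The plan is to set this up as a McLean-type deformation problem whose linearisation is underdetermined (``flabby''), exactly as in the Lagrangian case, so that the deformation space comes out infinite-dimensional. First I would parametrise the submanifolds $C^1$-close to $HL$ by sections $v$ of the normal bundle $\nu(HL)$ (of rank $4$) via a tubular neighbourhood, and express the $HL$ condition as the vanishing of the restriction $\varphi|_{HL}$, a section of $\Lambda^3T^*HL$. The infinitesimal deformations are then the kernel of the linearised operator $v\mapsto(\mathcal L_v\varphi)|_{HL}$. Using Cartan's formula together with the fundamental fact $d\varphi=0$ on a $G_2$ manifold, and the fact that restriction to $HL$ commutes with $d$, this linearisation simplifies to $L(v)=d(\iota_v\varphi)|_{HL}=d\big((\iota_v\varphi)|_{HL}\big)$.

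The crucial observation is that $L$ factors as $L=d\circ P$, where $P\colon\Gamma(\nu(HL))\to\Omega^2(HL)$ is the algebraic (zeroth-order) map $P(v)=(\iota_v\varphi)|_{HL}$. With this factorisation the condition $L(v)=0$ becomes simply the statement that the $2$-form $P(v)$ is closed, so that $\ker L=P^{-1}\big(\Omega^2_{\mathrm{cl}}(HL)\big)$. Everything then reduces to understanding the pointwise linear map $P_x\colon\nu_x(HL)\to\Lambda^2T_x^*HL$, from a rank-$4$ to a rank-$3$ space.

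The main obstacle, and the technical heart of the proof, is the following piece of $G_2$ linear algebra: for a $3$-plane $V\subset T_xM$ with $\varphi|_V=0$, the contraction $v\mapsto(\iota_v\varphi)|_V$ from $V^\perp$ to $\Lambda^2V^*$ is surjective with a one-dimensional kernel. I would prove this by reducing to a model configuration — for instance $V=\mathrm{span}(e_1,e_2,e_4)$ in the standard $(\R^7,\varphi)$, where a direct computation shows that three of the transverse normal directions are sent to a basis $e^{12},e^{14},e^{24}$ of $\Lambda^2V^*$ while the remaining direction is annihilated — and then invoking the homogeneity built into the definition of an $HL$ tangent plane in \cite{AS1} to conclude that $P_x$ has constant rank $3$. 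This makes $\ker P$ a line subbundle of $\nu(HL)$, and the $HL$ structure together with orientability of $HL$ provides a global nowhere-vanishing section $v_0$ spanning it, so that $\Gamma(\ker P)\cong C^\infty(HL)$.

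With these facts in place the identification is immediate. Since $\iota_{v_0}\varphi|_{HL}=0$ pointwise, every section $fv_0$ lies in $\ker L$ and contributes the summand $C^\infty(HL)$, on which the deformation is entirely free. The induced bundle isomorphism $\nu(HL)/\ker P\xrightarrow{\ \sim\ }\Lambda^2T^*HL$ shows that $P$ is surjective on sections and identifies $\ker L/\Gamma(\ker P)$ with the closed $2$-forms $\Omega^2_{\mathrm{cl}}(HL)$. Choosing a smooth splitting $\nu(HL)=\ker P\oplus(\ker P)^\perp$ then yields the decomposition $\ker L\cong C^\infty(HL)\oplus\Omega^2_{\mathrm{cl}}(HL)$, which is infinite-dimensional since $HL$ is compact. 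I expect the only delicate points beyond the linear-algebra lemma to be the global triviality of $\ker P$ and the verification that the parametrisation by normal sections is valid to first order; the former is handled by the chosen structure, and the latter is standard.
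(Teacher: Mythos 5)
Your proposal is correct and follows essentially the same route as the paper: linearise the deformation map, apply Cartan's formula with $d\varphi=0$ to reduce to the algebraic contraction $V\mapsto(\iota_V\varphi)|_{HL}$, and split its kernel (the $R=\chi(u,v,w)$ direction, giving $C^\infty(HL)$) from its image (all of $\Lambda^2T^*HL$, giving the closed $2$-forms). Your explicit rank computation for $P_x$ and the uniform treatment of the $R$-direction via $\ker P$ merely make precise what the paper obtains from its adapted frame $\{u,v,w,R,u\times R,v\times R,w\times R\}$ and its appeal to deforming inside a coassociative plane field.
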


\begin{thm} The space of all infinitesimal deformations of a smooth, compact,
orientable 4-dimensional $RS$ submanifold in a $G_2$
manifold $M$ within the class of $RS$ submanifolds is infinite-dimensional and can be identified with closed differential 3-forms on $RS$.
\end{thm}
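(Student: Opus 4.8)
The plan is to follow the template of classical Lagrangian deformation theory and of the proof of the $HL$ theorem, with the calibration $\varphi$ replaced throughout by its Hodge dual $\star\varphi$. Recall that $RS$ is the Lagrangian-type submanifold attached to the closed, nondegenerate $4$-form $\star\varphi$, so its defining condition is that the restriction of $\star\varphi$ to $RS$ vanish, the exact analogue of $\omega|_L=0$. An infinitesimal deformation is a section $V$ of the normal bundle $\nu=\nu_{RS/M}$, which has rank $7-4=3$, and the central object is the contraction map $\Phi:V\mapsto (\iota_V\star\varphi)|_{RS}$, sending a normal field to a differential $3$-form on the $4$-manifold $RS$.

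First I would linearize the defining equation along $V$. Since $\d(\star\varphi)=0$, Cartan's formula gives $\mathcal L_V\star\varphi=\d(\iota_V\star\varphi)$, and after pulling back to $RS$ the first variation of the $RS$-condition becomes $\d\bigl((\iota_V\star\varphi)|_{RS}\bigr)=0$. Thus $V$ is an infinitesimal $RS$ deformation exactly when the $3$-form $\Phi(V)$ is closed --- the same mechanism that converts $\omega|_L=0$ into a closed $1$-form in the Lagrangian case and into closed $2$-forms in the $HL$ theorem. I would then show that $\Phi$ establishes an isomorphism between the space of infinitesimal $RS$ deformations and the space $Z^3(RS)$ of closed $3$-forms: injectivity comes from the nondegeneracy of $\star\varphi$, which lets one recover $V$ from $\iota_V\star\varphi$, and the identification of the image with the closed $3$-forms uses the pointwise linear algebra of $\star\varphi$ on an $RS$-tangent $4$-plane. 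From this isomorphism both conclusions are immediate: the space is infinite-dimensional, since the closed $3$-forms on a compact $4$-manifold already contain the infinite-dimensional space of exact forms $\d\Omega^2(RS)$, and it is by construction identified with the closed $3$-forms on $RS$.

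It is illuminating to contrast this with McLean's theorem for coassociative submanifolds, where the corresponding contraction $V\mapsto(\iota_V\varphi)|_P$ lands in the rank-$3$ bundle $\Lambda^2_+T^*P$ of self-dual $2$-forms, so that closedness upgrades to harmonicity and the deformation space is finite-dimensional; here the Lagrangian-type (rather than calibrated) nature of $RS$ is precisely what leaves us with closed, not harmonic, forms. The main obstacle will be the pointwise identification underlying $\Phi$: because $\nu$ has rank $3$ while $\Lambda^3T^*RS$ has rank $4$, one must analyze carefully how $\star\varphi$ contracts against an $RS$ $4$-plane in order to characterize exactly which $3$-forms occur and to confirm that, together with the closedness equation, they reproduce the space claimed in the statement. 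I would also verify that the second-fundamental-form terms drop out of the linearization --- they do, since only $\d(\iota_V\star\varphi)$ appears, thanks to $\d(\star\varphi)=0$ --- and that compactness and orientability are used only to control $Z^3(RS)$ globally, not in the local identification.
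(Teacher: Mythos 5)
Your proposal follows essentially the same route as the paper's proof: linearize the deformation map $F(V)=(\exp_V)^*(\star\varphi)$ at $0$, use Cartan's formula together with $\d\star\varphi=0$ to obtain $dF(0)(V)=\d\bigl((\iota_V\star\varphi)|_{RS}\bigr)$, and identify the kernel with closed $3$-forms via the contraction $V\mapsto(\iota_V\star\varphi)|_{RS}$. The one step you explicitly defer --- the pointwise identification in the face of the rank mismatch between $N(RS)$ (rank $3$) and $\Lambda^3T^*(RS)$ (rank $4$) --- is precisely what the paper supplies: the bundle-valued $3$-form $\chi$ induces an isomorphism $N(RS)\cong\tilde{T}(RS)\subset T(RS)$, under which $(\iota_V\star\varphi)|_{RS}=\star v$ for $v$ the metric-dual $1$-form of $V$, so that $dF(0)(V)=\d\star v$ and the $3$-forms arising are Hodge duals of $1$-forms associated to $\tilde{T}(RS)$.
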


\begin{rem}
Note that $M$ does not have to be a manifold with $G_2$ holonomy for these theorems.  Theorem 1.1 also holds when $M$ is a manifold with  a closed $G_2$ structure $\phi$ and Theorem 1.2 can be extended to the case when $M$ is a manifold with a co-closed $G_2$ structure $\phi$. 
\end{rem}

\section{Deformations of $HL$ submanifolds}\label{HLsub}

In this section, we study infinitesimal deformations of $HL$ submanifolds. First, let's recall some basic definitions.  For more details about $G_2$ manifolds, we refer the reader to \cite{bryant3}, \cite{bryant4}, and \cite{HL}.

\vspace{.1in}

A manifold with $G_2$ structure is a smooth seven-dimensional manifold $M$ such that the structure group of $M$ reduces to the exceptional Lie group $G_2$.  Equivalently, since $G_2$ can be defined as the automorphism group of $\mathbb{R}^7$ that preserves the $3$-form 
\[ \varphi_0 = e^{123} + e^{145} + e^{167} + e^{246} - e^{257} - e^{347} - e^{356} \]
where $e^{123} = dx^1 \wedge dx^2 \wedge dx^3$ and $(x_1, \ldots, x_7)$ are coordinates on $\mathbb{R}^7$,
we can define a $G_2$ structure in the following manner.

\begin{dfn} A {\it manifold with $G_2$ structure} is a smooth $7$-dimensional manifold $M$ equipped with a nondegenerate $3$-form $\varphi\in \Omega^3(M)$ such that at any point $p\in M$, 
\[ (T_p M, \varphi_p) \cong (\mathbb{R}^7, \varphi_0). \]
\end{dfn}

\vspace{.1in}

This definition implies that on a local chart of a manifold with $G_2$ structure $(M, \varphi)$, up to quadratic terms the 3-form $\varphi $ coincides with the form $\varphi_{0} \in \Omega^{3} (\R^7)$.  The 3-form $\varphi $ determines a metric $g$ and a cross product $\times$ on $M$ given by
\[ \varphi (u,v,w) = g(u, v\times w), \  \  u, v, w  \in TM. \]
In this paper, we write this metric as $g(\cdot, \cdot) = \langle \cdot, \cdot \rangle$ for simplicity.

\begin{dfn} Suppose $(M, \varphi)$ is a manifold with $G_2$ structure.  We call $(M, \varphi)$ a {\it $G_2$ manifold} if $\varphi$ is torsion-free with respect to the Levi-Civita connection.
Equivalently, we could say that $M$ is a $G_2$ manifold if $M$ has holonomy contained in the Lie group $G_2$. 
\end{dfn}

The torsion-free condition is equivalent to the condition that the form $\varphi$ is closed and co-closed, i.e.
\[ d\varphi = d\star\varphi = 0 .\]  

We call $\varphi$ and $\star\varphi$ the calibration $3$-form and $4$-form, respectively, as they define {\it calibrated submanifolds}; manifolds that are volume-minimizing in their homology class.

\begin{dfn} Let $(M, \varphi )$ be a $G_2$ manifold with calibration 3-form $\varphi$. A 4-dimensional
submanifold $C\subset M$ is called {\em coassociative } if
$\varphi|_C=0$. A 3-dimensional submanifold $A\subset M$ is called
{\em associative} if $\varphi|_A\equiv dvol(A)$.

\vspace{.1in}

Note that the condition $\varphi|_A\equiv dvol(A)$is equivalent to the condition that $\chi|_A\equiv 0$,  where $\chi \in \Omega^{3}(M,TM)$ is the tangent bundle-valued 3-form defined by the identity
\begin{equation*}
\langle \chi (u,v,w) , z \rangle=*\varphi  (u,v,w,z).
\end{equation*}
\end{dfn}

\begin{rem}
The equivalence of $\varphi|_{A} = dvol$ and $\langle \chi|_{A}, \chi|_{A} \rangle = 0$ follows from the associator equality which was shown in Harvey-Lawson, \cite{HL}:

\begin{equation*}
\varphi  (u,v,w)^2 + \frac{1}{4}|\chi (u,v,w)|^2= |u\wedge v\wedge w|^2 .
\end{equation*}

\end{rem}

\begin{dfn}
 A {\it Harvey-Lawson manifold} is a 3-dimensional submanifold $HL\subset M$ of a $G_2$ manifold such that 
\[ \varphi|_{HL} = 0. \]

Equivalently, this is defined by $\langle \chi|_{HL}, \chi|_{HL} \rangle = 1$. 
\end{dfn}

\begin{rem}

Again, the associator equality, \cite{HL},  gives the equivalence of the conditions $\varphi|_{HL} = 0$ and $\langle \chi|_{HL}, \chi|_{HL} \rangle = 1$.

\end{rem}

\begin{rem}
One can obtain $G_2$ manifolds from Calabi-Yau manifolds in the following way. Let $(N,\omega, \Omega)$ be a complex
3-dimensional Calabi-Yau manifold with K\"{a}hler form $\omega$
and a nonvanishing holomorphic (3,0)-form $\Omega$.  Then the direct product
$N^6\times S^1$ has holonomy group $SU(3)$ which is a subset of $G_2$. Therefore $N^6\times S^1$  is a
$G_2$ manifold. In this particular case, $ \varphi = \Re \Omega + \omega \wedge dt$. For the noncompact case $N^6\times \mathbb{R}$ is also a $G_2$ manifold (with reduced holonomy). Using these structures, it is easy to show that all special Lagrangian submanifolds (with phase $\theta=\frac{\pi}{2}$) of the Calabi-Yau manifold $N$ will be Harvey-Lawson submanifolds of $N^6\times S^1$ or 
$N^6\times \mathbb{R}$.

\end{rem}

Similar to the tangent bundle-valued 3-form $\chi$ on a $G_2$ manifold, there is also a tangent
bundle-valued 2-form, which is just the cross product of $M$.  

\vspace{.05in}

\begin{dfn} Let $(M, \varphi )$ be a $G_2$ manifold. Then $\psi \in
\Omega^{2}(M, TM)$ is the tangent bundle-valued 2-form defined by
the identity
\begin{equation*}
\langle \psi (u,v) , w \rangle=\varphi  (u,v,w)=\langle u\times v , w
\rangle .
\end{equation*}
\end{dfn}

\vspace{.05in}

We can also express the tangent bundle-valued forms $\chi$ and $\psi$ in local coordinates as above for $\varphi$. More generally, if $e_1,...,e_7$ is any local orthonormal frame with dual frame $e^1,..., e^7$, by definition we can write $\chi$ and $\psi$ in coordinates as

 \begin{equation*}
\begin{aligned}
\chi=&\;\;(e^{256}+e^{247}+e^{346}-e^{357})e_1\\
&+(-e^{156}-e^{147}-e^{345}-e^{367})e_2\\
&+(e^{157}-e^{146}+e^{245}+e^{267})e_3\\
&+(e^{127}+e^{136}-e^{235}-e^{567})e_4\\
&+(e^{126}-e^{137}+e^{234}+e^{467})e_5\\
&+(-e^{125}-e^{134}-e^{237}-e^{457})e_6\\
&+(-e^{124}+e^{135}+e^{236}+e^{456})e_7,\\
\end{aligned}
\end{equation*}

\vspace{.1in}

\begin{equation*}
\begin{aligned}
\psi=&\;\;(e^{23}+e^{45}+e^{67})e_1\\
&+(e^{46}-e^{57}-e^{13})e_2\\
&+(e^{12}-e^{47}-e^{56})e_3\\
&+(e^{37}-e^{15}-e^{26})e_4\\
&+(e^{14}+e^{27}+e^{36})e_5\\
&+(e^{24}-e^{17}-e^{35})e_6\\
&+(e^{16}-e^{25}-e^{34})e_7.\\
\end{aligned}
\end{equation*}

Next, we study the deformations of HL submanifolds in a $G_2$ manifold. Recall that McLean studied the deformations of compact special Lagrangian
submanifolds in Calabi-Yau manifolds and proved the following theorem \cite{mclean}.

\begin{thm}  The moduli space of all deformations of a smooth, compact,
orientable special Lagrangian submanifold $L$ in a Calabi-Yau
manifold $N$ within the class of special Lagrangian submanifolds
is a smooth manifold of dimension equal to dim$(H^1(L))$.
\label{co2thm}
\end{thm}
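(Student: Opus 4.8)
The plan is to follow McLean's strategy of converting the nonlinear special Lagrangian condition into an elliptic problem via the Weinstein neighborhood theorem. Writing $n=\dim L$, I would first use the fact that $L$ is Lagrangian to identify a tubular neighborhood of $L$ in $N$ symplectomorphically with a neighborhood of the zero section of the cotangent bundle $T^*L$ carrying its canonical symplectic form $\om$. Under this identification the normal bundle of $L$ is isomorphic to $T^*L$ (a normal vector $V$ corresponds to the $1$-form $\io_V\om$), so nearby submanifolds are parametrized by graphs $L_\al=\{(x,\al_x):x\in L\}$ of $1$-forms $\al\in\Om^1(L)$. A standard computation on $T^*L$ shows that the pullback of $\om$ to $L_\al$ is exactly $\d\al$, so $L_\al$ is Lagrangian precisely when $\al$ is closed.

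Next I would analyze the remaining equation. Among the Lagrangians, the special Lagrangian locus is cut out by the vanishing of $\Im\Om|_L$, a top-degree $n$-form on $L$. Pulling $\Im\Om$ back to the graph $L_\al$ and expanding, I expect
\[ (\Im\Om)|_{L_\al}=\d\!\star\!\al+Q(\al), \]
where $Q$ gathers the terms that are quadratic and higher order in $\al$ and its first derivatives. Restricting the full deformation operator to closed $1$-forms therefore gives $F(\al)=\d\!\star\!\al+Q(\al)$, whose linearization at $\al=0$ is $\al\mapsto\d\!\star\!\al$. Combined with the Lagrangian condition $\d\al=0$, the linearized system reads $\d\al=\d\!\star\!\al=0$; that is, $\al$ is harmonic. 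By Hodge theory on the compact oriented manifold $L$, the space of such $\al$ is $\mathcal H^1(L)\cong H^1(L)$, which produces the claimed dimension for the space of infinitesimal deformations.

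The hard part is to upgrade this infinitesimal count to genuine smoothness of the moduli space, i.e. to show that every infinitesimal deformation integrates and that there are no obstructions. McLean's key observation, which I would reproduce, is that the period of $\Im\Om$ is a deformation invariant: since $[\Im\Om]$ pairs homologically with the fixed class $[L_\al]=[L]$, the $n$-form $F(\al)$ always integrates over the connected manifold $L$ to $\int_L\Im\Om|_L=0$, and is therefore exact. Hence $F$ may be viewed as a map into the space of exact $n$-forms, and on this restricted target a short Hodge-theoretic argument (writing a closed $\al$ as a harmonic part plus $\d f$, so that $\d\!\star\!\al$ reduces to a multiple of $(\Delta f)\,\d\vol$) shows that the linearization $\al\mapsto\d\!\star\!\al$ is surjective with kernel exactly $\mathcal H^1(L)$.

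Finally I would set the problem up in suitable Hölder or Sobolev completions so that the governing operator, essentially $\d+\d^*$, is elliptic and the spaces above become Banach manifolds. Surjectivity of the linearization onto the slice of exact forms means there are no obstructions, so the implicit function theorem identifies $F^{-1}(0)$ near $\al=0$ with a smooth manifold modelled on the kernel $\mathcal H^1(L)$, while elliptic regularity guarantees that the resulting deformations are smooth submanifolds. This exhibits the moduli space as a smooth manifold of dimension $\dim H^1(L)$. I expect the principal obstacle to be precisely the unobstructedness step: the geometric content lies in recognizing that the homological invariance of the periods forces the image of $F$ into exact forms, which is exactly what makes the linearization surjective and removes all cokernel.
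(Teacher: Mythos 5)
The paper does not prove this statement: Theorem~\ref{co2thm} is quoted verbatim from McLean's paper \cite{mclean} as background motivation, so there is no in-paper argument to compare against. Your proposal is a correct reconstruction of McLean's original proof (in the form later streamlined by Hitchin): the Weinstein identification of nearby submanifolds with $1$-forms, the observation that the graph of $\al$ is Lagrangian iff $\d\al=0$, the linearization $\al\mapsto\d\star\al$ of the $\Im\Om$-equation giving the harmonic condition $\d\al=\d\star\al=0$ and hence the dimension $\dim H^1(L)$, and, crucially, the homological argument that $\int_{L_\al}\Im\Om=\int_L\Im\Om|_L=0$ forces the image of the deformation operator into exact $n$-forms, against which the linearization is surjective (via $\d\star\d f=\pm(\Delta f)\,\d\mathrm{vol}$ and the fact that $\Delta$ surjects onto functions of mean zero). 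With the Banach-space setup and elliptic regularity you describe, the implicit function theorem then yields the smooth moduli space; you have correctly identified the unobstructedness step as the essential content.
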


\vspace{.1in}
In this section, we provide the analogue of this theorem for HL manifolds (and also for $RS$ manifolds in the next section), but in these cases, the space of deformations will be infinite-dimensional.  Note that this is similar to the space of deformations of Lagrangian submanifolds of symplectic manifolds.

\vspace{.1in}

We first describe the normal bundle of an $HL$ submanifold inside $G_2$ manifold $M$. Recall that an orthonormal $3$-frame field $\Gamma=<u,v,w>$ on $(M,\varphi )$ is called a {\it $G_2$-frame field} if  $\varphi(u,v,w)= <u\times v, w>=0$, \cite{AS1}. 
By results of Emery Thomas, \cite{ET}, there exists a nonvanishing $2$-frame field $\Lambda=<u,v>$ on $M$. 
 
 \vspace{.1in}
 
 Let $TM= {\E}\oplus {\V}$  be the corresponding splitting with ${\E}=<u,v, u\times v>$.  Let $w$ be a unit section of the bundle ${\V} \to M$. Even though this section $w$ may not exist on the entire manifold $M$, it exists on a tubular neighbourhood of the $3$-skeleton $M^{(3)}$ of $M$ (by obstruction theory) which is the complement of a $3$ complex $Y\subset M$. Therefore, $\varphi(u,v, w)=<u\times v, w>=0$, and hence $\Gamma=<u,v,w>$ is a $G_{2}$ (i.e. $HL$) frame field.

\vspace{.1in}

Consider the non-vanishing vector field
$$R=\chi(u,v,w)=-u\times(v\times w) .$$
We recall the following lemma from \cite{AS1}.

\begin{lem} The following properties hold:

\begin{itemize}
\item[(a)]  If $< u,v,w>$ is an HL $3$-plane field, then $\V=< u,v,w, R>$ is a coassociative $4$-plane field. 
\item[(b)] $ \E= < u\times v, v\times w, w\times u > $ is an associative $3$-plane field.
\item[(c)] $\E \perp \V$.
 \item[(d)] $\{u,v,w, R, u\times v, v\times w, w\times u\}$ is an orthonormal frame field on $M$.
\end{itemize}
\end{lem}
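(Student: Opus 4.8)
The plan is to treat all four assertions as pointwise, $G_2$-invariant statements about the octonionic cross product, so that it suffices to verify them at an arbitrary point after identifying $(T_pM,\varphi_p)$ with $(\R^7,\varphi_0)$. The two algebraic facts I will lean on are: first, that for orthonormal $a,b$ the vector $a\times b$ is a unit vector orthogonal to both $a$ and $b$, together with the pairing $\langle a\times b,c\rangle=\varphi(a,b,c)$; and second, the contraction identity $\langle a\times b,c\times d\rangle=\langle a,c\rangle\langle b,d\rangle-\langle a,d\rangle\langle b,c\rangle-\star\varphi(a,b,c,d)$. The first thing I record is that, by the cyclic symmetry of the $3$-form $\varphi$, the $HL$ condition $\varphi(u,v,w)=0$ is equivalent to each of $\langle u\times v,w\rangle=\langle v\times w,u\rangle=\langle w\times u,v\rangle=0$; combined with $a\times b\perp a,b$ this already shows that each of $u\times v$, $v\times w$, $w\times u$ is orthogonal to all of $u,v,w$.

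Next I would establish the orthonormal frame assertion (d), from which (c) drops out. Applying the contraction identity to the pairs built from $u,v,w$ --- in each case two of the four slots coincide, so the $\star\varphi$ term and one of the products vanish --- shows that $u\times v$, $v\times w$, $w\times u$ are mutually orthogonal unit vectors. For $R=-u\times(v\times w)$: orthogonality to $u$ and to $v\times w$ is automatic; orthogonality to $v$, to $w$, to $u\times v$ and to $w\times u$ each follows from the contraction identity after noting that the relevant expression has a repeated slot or a vanishing inner product; and $|R|^2=|u|^2\,|v\times w|^2-\langle u,v\times w\rangle^2=1$ since $\langle u,v\times w\rangle=\varphi(v,w,u)=0$. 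Thus the seven vectors $\{u,v,w,R,u\times v,v\times w,w\times u\}$ are orthonormal, which is (d); since $u\times v,v\times w,w\times u$ are orthogonal to $u,v,w$ and to $R$, we get $\E\perp\V$, which is (c), and hence $\E=\V^\perp$.

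Assertion (a) then costs no new work: $\V$ is coassociative exactly when $\varphi$ vanishes on every triple drawn from $\{u,v,w,R\}$, and of these $\varphi(u,v,w)=0$ is the hypothesis, while $\varphi(u,v,R)=\langle u\times v,R\rangle$, $\varphi(u,w,R)=-\langle w\times u,R\rangle$ and $\varphi(v,w,R)=\langle v\times w,R\rangle$ all vanish by the orthogonality relations just proved.

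The genuinely nontrivial point is (b), since associativity of $\E$ is not forced by orthogonality alone, and I expect it to be the main obstacle. The cleanest route is to invoke the Harvey--Lawson duality that the orthogonal complement of a coassociative $4$-plane is an associative $3$-plane: by (a) the plane $\V$ is coassociative and by (c) we have $\E=\V^\perp$, so $\E$ is associative. A self-contained alternative is to check $\varphi(u\times v,v\times w,w\times u)=\pm1$ directly, reducing $(u\times v)\times(v\times w)$ by the identity $a\times(b\times c)=\langle a,c\rangle b-\langle a,b\rangle c-\chi(a,b,c)$ (which also recovers $R=\chi(u,v,w)$) and pairing with $w\times u$; this is the one place where the computation is heavy and where the sign and normalization conventions for $\chi$ and $\star\varphi$ must be pinned down. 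As a uniform alternative to the entire argument, one may note that every claim is $G_2$-invariant and pointwise and that $G_2$ acts transitively on $HL$ $3$-frames, reducing (a)--(d) to the single model frame $u=e_1,\,v=e_2,\,w=e_4$ in $(\R^7,\varphi_0)$, for which $u\times v=e_3$, $v\times w=e_6$, $w\times u=-e_5$ and $R=-e_7$ make all four statements immediate; there the only thing left to justify is the transitivity.
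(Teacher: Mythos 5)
Your argument is correct, but note that the paper itself offers no proof of this lemma at all --- it simply recalls the statement from \cite{AS1} --- so what you have written is a self-contained proof of something the paper treats as a black box. Your route (establish (d) first from the contraction identity $\langle a\times b,c\times d\rangle=\langle a,c\rangle\langle b,d\rangle-\langle a,d\rangle\langle b,c\rangle\pm\star\varphi(a,b,c,d)$, deduce (c) and (a) as immediate consequences, and obtain (b) from the Harvey--Lawson fact that the orthogonal complement of a $4$-plane on which $\varphi$ vanishes is associative) is sound, and it is worth emphasizing that every invocation of the contraction identity in your proof of (d) has a repeated slot, so the sign of the $\star\varphi$ term --- which, as written, is opposite to what the paper's coordinate expressions for $\varphi_0$ and $\star\varphi_0$ actually give --- never affects the argument; you should still fix that sign if you carry out the direct computation of $\varphi(u\times v,v\times w,w\times u)=\pm 1$ for (b). Your closing alternative is also legitimate and arguably the cleanest write-up: all four claims are pointwise and $G_2$-equivariant, $G_2$ acts transitively on $HL$ $3$-frames (via the chain $G_2\supset SU(3)\supset SU(2)$ of stabilizers acting transitively on the relevant spheres, which is the one step you correctly flag as needing justification), and in the model frame $u=e_1$, $v=e_2$, $w=e_4$ one reads off $u\times v=e_3$, $v\times w=e_6$, $w\times u=-e_5$, $R=-e_7$, from which (a)--(d) are immediate by inspecting the coordinate expressions of $\varphi_0$ and $\star\varphi_0$. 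Either version would be a worthwhile addition, since the reader of this paper currently has to chase the reference to find any proof.
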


 \vspace{.1in}

In particular we can express $\varphi$ as

$$\varphi = u^{\#}\wedge v^{\#}\wedge (u^{\#}\times v^{\#}) + v^{\#}\wedge w^{\#} \wedge (v^{\#} \times w^{\#}) + w^{\#} \wedge u^{\#} \wedge (w^{\#} \times u^{\#}) $$
$$\;\;\;\;\;\;\; + \;u^{\#} \wedge R^{\#} \wedge (v^{\#}\times w^{\#}) + v^{\#}\wedge R^{\#} \wedge (w^{\#} \times u^{\#}) + w^{\#} \wedge R^{\#} \wedge (u^{\#} \times v^{\#}) $$
$$ -\; (u^{\#}\times v^{\#}) \wedge (v^{\#}\times w^{\#} ) \wedge (w^{\#}\times u^{\#}) .$$

\vspace{.1in}

Note that $\{ u,v,w, R, u\times v, v\times w, w\times u\}$  and $\{u,v,w, R, w\times R, u\times R, v\times R\}$ are equivalent frames, therefore $\varphi$  can be given by

$$\varphi = u^{\#}\wedge v^{\#}\wedge (w^{\#}\times R^{\#}) + v^{\#}\wedge w^{\#} \wedge (u^{\#} \times R^{\#}) + w^{\#} \wedge u^{\#} \wedge (v^{\#} \times R^{\#}) $$
$$\;\;\;\;\;\;\; + \;u^{\#} \wedge R^{\#} \wedge (u^{\#} \times R^{\#}) + v^{\#}\wedge R^{\#} \wedge (v^{\#} \times R^{\#})  + w^{\#} \wedge R^{\#} \wedge (w^{\#}\times R^{\#}) $$
$$ -\; (w^{\#}\times R^{\#})\wedge  (u^{\#} \times R^{\#}) \wedge (v^{\#} \times R^{\#})  .$$

\vspace{.1in}

The normal bundle of an $HL$ submanifold can be decomposed as 
\[ N(HL) = \tilde{N}(HL)\oplus R ,\]
 where $\tilde{N}(HL)$ is generated by vector fields $u\times R$, $v\times R$, and $w\times R$.  Since $\langle \psi (u,v) , w \rangle=\varphi  (u,v,w)=\langle u\times v , w \rangle=0$ for an $HL$ submanifold,  $\tilde{N}(HL)$ is isomoprhic to $T(HL)$.  The cross product structure $\times$ (also known as $\psi$) induces this isomorphism.

\vspace{.1in}

\begin{thm} The space of infinitesimal deformations of a smooth, compact,
orientable 3-dimensional Harvey Lawson submanifold $HL$ in a $G_2$
manifold $M$ within the class of $HL$ submanifolds is infinite-dimensional. The deformation space can be identified with the direct sum of the spaces of smooth functions and closed 2-forms on $HL$.
\label{thm1}
\end{thm}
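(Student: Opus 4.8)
The plan is to follow McLean's strategy adapted to the single scalar condition defining an $HL$ submanifold. An infinitesimal deformation is a section $V$ of the normal bundle $N(HL)$, and I want to linearize the defining equation $\varphi|_{HL}=0$ and identify the kernel of the resulting operator. First I would record the linearization. If $F_t$ is a one-parameter family of embeddings with $F_0$ the inclusion and variation field $V\in\Gamma(N(HL))$, then $\frac{d}{dt}\big|_{t=0}F_t^*\varphi=(\mathcal{L}_V\varphi)|_{HL}$; Cartan's formula together with the $G_2$ (or merely closed $G_2$-structure) condition $d\varphi=0$ gives
\[
\left.\frac{d}{dt}\right|_{t=0} F_t^*\varphi = d\bigl((\iota_V\varphi)|_{HL}\bigr).
\]
Thus $V$ is an infinitesimal $HL$ deformation exactly when the $2$-form $\alpha_V:=(\iota_V\varphi)|_{HL}$ on $HL$ is closed. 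This is the ``one condition'' analogue of McLean's theorem: the calibration $4$-form $\star\varphi$ never enters, so no coclosedness requirement appears, and this is precisely the source of the infinite-dimensionality (compare ordinary Lagrangian versus special Lagrangian deformations).

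Next I would analyze the purely algebraic map $V\mapsto\alpha_V$ using the splitting $N(HL)=\tilde N(HL)\oplus\langle R\rangle$ recorded above, with $\tilde N(HL)=\E$ the associative $3$-plane spanned by $u\times v$, $v\times w$, $w\times u$, and the two explicit expressions for $\varphi$. Writing $V=fR+V_{\tilde N}$ with $f\in C^\infty(HL)$ and $V_{\tilde N}\in\Gamma(\tilde N(HL))$, the key observation is that, since $\E\perp\V$ and $R\perp T(HL)$, the only coframe elements surviving restriction to $HL$ are $u^{\#},v^{\#},w^{\#}$. A direct interior-product computation then yields two facts. First, $(\iota_R\varphi)|_{HL}=0$: every monomial of $\varphi$ survives contraction with $R$ only at the cost of a factor dual to a vector in $\E$ or in the $R$-direction, each of which vanishes on restriction to $HL$. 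Hence $\alpha_{fR}=0$ for all $f$, so the entire $R$-direction lies in the kernel and contributes a free summand $C^\infty(HL)$. Second, $V_{\tilde N}\mapsto\alpha_{V_{\tilde N}}$ is an isomorphism onto $\Omega^2(HL)$: contracting $\varphi$ against $u\times v$, $v\times w$, $w\times u$ returns $u^{\#}\wedge v^{\#}$, $v^{\#}\wedge w^{\#}$, $w^{\#}\wedge u^{\#}$ respectively (all remaining terms carry a normal coframe factor and restrict to zero), and these three $2$-forms are a pointwise basis of $\Lambda^2T^*(HL)$.

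Combining the two computations, the linearized equation $d\alpha_V=0$ decouples completely: $f$ is unconstrained, while $V_{\tilde N}$ is constrained exactly by the requirement that the $2$-form $\alpha_{V_{\tilde N}}$ be closed. Therefore the space of infinitesimal deformations is identified with $C^\infty(HL)\oplus\{\text{closed }2\text{-forms on }HL\}$, which is infinite-dimensional, proving the theorem. I expect the main obstacle to be bookkeeping rather than anything conceptual: the content lies in verifying $(\iota_R\varphi)|_{HL}=0$ and the surjectivity of $V_{\tilde N}\mapsto\alpha_{V_{\tilde N}}$, both of which rest on the orthogonality relations of Lemma parts (b)--(d) and on coframe duals of normal directions vanishing upon restriction. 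A secondary point worth flagging is that, because the statement concerns only infinitesimal (linearized) deformations, no integrability or obstruction analysis is required---the identification is simply the kernel of the linear operator $V\mapsto d\bigl((\iota_V\varphi)|_{HL}\bigr)$.
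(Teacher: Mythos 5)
Your proposal is correct and follows the same McLean-style strategy as the paper: linearize $F(V)=(\exp_V)^*(\varphi|_{HL_V})$ via Cartan's formula, use $d\varphi=0$ to reduce to $dF(0)(V)=d\bigl((\iota_V\varphi)|_{HL}\bigr)$, and read off the kernel. The one place you diverge is in how the two summands are extracted: the paper identifies $(\iota_V\varphi)|_{HL}$ with $\star v$ for the dual $1$-form $v$ and then accounts for the $C^\infty(HL)$ factor by a separate geometric argument (deformations in the $R$-direction stay inside a coassociative $4$-manifold, on which any $3$-submanifold is automatically $HL$), whereas you split $N(HL)=\tilde N(HL)\oplus\langle R\rangle$ and verify pointwise that $(\iota_R\varphi)|_{HL}=0$ and that contraction against $u\times v$, $v\times w$, $w\times u$ gives a bundle isomorphism onto $\Lambda^2T^*(HL)$. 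Your version is the more careful one here --- the paper's identification of a rank-$4$ normal bundle with $1$-forms on a $3$-manifold only makes sense after exactly the splitting you make explicit --- and the two treatments of the $R$-direction are the infinitesimal and finite versions of the same observation, so both yield the stated identification with $C^\infty(HL)\oplus\{\text{closed }2\text{-forms}\}$.
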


\begin{proof} For a small vector field $V$ the deformation map
is a map, $F$, defined from the space of sections of the normal bundle,
$\Gamma(N(HL))$, to the space of differential $3$-forms,
$\Lambda^3T^*(HL)$ on $HL$, such that
\begin{equation*}
\begin{split}
&F: \Gamma(N(HL))\rightarrow
\Lambda^3T^*(HL) , \\
&F(V)=((\exp_V)^*(\varphi|_{HL_V})).
\end{split}
\end{equation*}

\vspace{.1in}

The deformation map $F$ is the restriction of $\varphi$ to $HL_{V}$ and then pulled back to $HL$ via $(\exp_V)^*$ where
$\exp_V$ is the normal exponential map which gives a diffeomorphism of
$HL$ onto its image $HL_V$ in a neighborhood of 0.

\vspace{.1in}
There is a natural identification of normal vector fields to $HL$ with differential
$1$-forms on $HL$. Furthermore, since $HL$ is compact, these normal
vector fields can be identified with nearby submanifolds. Under
these identifications, the kernel of $F$ then corresponds to the $HL$ deformations.

\vspace{.1in}

The linearization of $F$ at $(0)$ is given by
\begin{equation*}
dF(0):\Gamma(N(HL))\rightarrow \Lambda^3T^*(HL)
\end{equation*}
\noindent where
\begin{equation*}
\begin{split}
dF(0)(V)=&\frac{\displaystyle\partial}{\displaystyle\partial{t}}F(tV)|_{t=0}=\frac{\displaystyle\partial}{\displaystyle\partial{t}}[\exp_{tV}^*(\varphi)] \\
=&[{\mathcal L}_{V}(\varphi)|_{HL}].
\end{split}
\end{equation*}

\vspace{.1in}

Further, by Cartan's formula, we have
\begin{equation*}
\begin{split}
dF(0)(V)&=((i_Vd\varphi +d(i_V\varphi))|_{HL}\\
&=d(i_V\varphi)|_{HL}=(d\star v),
\end{split}
\end{equation*}
where $i_V$ represents the interior derivative, $v$ is the dual
$1$-form to the vector field $V$ with respect to the induced
metric, and $\star v$ is the Hodge dual of $v$ on HL. Hence
\begin{equation*}
dF(0)(V) =(d\star v)=(d^*v).
\end{equation*}


Therefore the space of nontrivial deformations of $HL$ submanifolds can be identified with closed $2$-forms on $HL$. The other component of the deformation space comes from the trivial deformations of $HL$ submanifolds. These correspond to deforming a 3-dimensional $HL$ manifold inside a coassociative manifold. By definition, any such 3-manifold will be $HL$, which implies that deformations of $HL$ inside a coassociative submanifold in the direction of $R$ can be identified with smooth functions on $HL$.

\end{proof}

\section{Deformations of RS submanifolds}
Let $(M,\varphi)$ be a $G_2$ manifold.  In this section, we define Lagrangian-type $4$-dimensional submanifolds of $M$ and describe their deformation space.
Note that we can write the calibration $4$-form $\star\varphi$, in local coordinates as
\[ \star\varphi = e^{4567} + e^{2367} + e^{2345} + e^{1357} - e^{1346} - e^{1256} - e^{1247} .\]

\begin{dfn}
An {\it RS manifold} is a 4-dimensional submanifold $RS\subset M$ of a $G_2$ manifold such that
\[ \star\varphi|_{RS} = 0 . \]
As with Harvey-Lawson manifolds, there is a tangent bundle-valued $4$-form $\sigma$ defining $RS$ such that
\[ \star\varphi|_{RS} = 0 \]
if and only if 
\[ \langle \sigma|_{RS}, \sigma|_{RS} \rangle = 1. \]

\end{dfn}

This form, $\sigma$, is called the {\it coassociator}, and we can write it as
\[ \sigma(u,v,w,z) = \langle v, w\times z \rangle u + \langle w, u\times z \rangle v + \langle u, v\times z \rangle w + \langle v, u\times w \rangle z .\]
In \cite{HL}, Harvey and Lawson prove the coassociator equality,
\[ *\varphi(u,v,w,z)^2 + \frac{1}{4} | \sigma(u,v,w,z)|^2 = |u\wedge v\wedge w \wedge z|^2 .\]
Note that this identity provides justification for the above definition.
Using the coordinates above for $\varphi$, we can write $\sigma$ in coordinates as follows:

\begin{eqnarray*} \sigma &= &(-e^{1347}-e^{1356} - e^{1257} + e^{1246})e_1 \\
&  & + (-e^{2347} - e^{2356} - e^{1267} - e^{1245})e_2 \\
&  & + (-e^{2346} + e^{2357} - e^{1367} - e^{1345})e_3 \\
&  & +  (e^{3456} +e^{2457} - e^{1467} - e^{1234})e_4 \\
&  & + (-e^{3457} + e^{2456} - e^{1567} - e^{1235})e_5  \\
&  & + (-e^{3467}- e^{2567}- e^{1456}- e^{1236})e_6 \\
&  & + (e^{3567} - e^{2467} - e^{1457} - e^{1237}) e_7. \end{eqnarray*}

\vspace{.1in}

\begin{rem}
Recall that the direct product $N^6\times S^1$ is a $G_2$ manifold, where $(N,\omega, \Omega)$ is a complex
3-dimensional Calabi-Yau manifold with K\"{a}hler form $\omega$ and nonvanishing holomorphic $(3,0)$-form $\Omega$.  In this case, $ \varphi = \Re \Omega + \omega \wedge dt$ and $* \varphi = -dt\wedge \Im \Omega + \frac{1}{2}(\omega \wedge \omega)$. Therefore, the 4-manifolds given by $SL\times S^1$, where $SL$ is a special Lagrangian submanifold of $N$ with phase $\theta=0$ will be $RS$ submanifolds of $N^6\times S^1$.  Analogous results hold for the noncompact $G_2$ manifold $N^6\times \mathbb{R}$.

\end{rem}

Given a nonvanishing $4$-frame field $\langle u,v,w,z \rangle$, note that we can define a nonvanishing vector field $S$ given by
$$S=\sigma (u,v,w,z)=\langle v, w\times z \rangle u + \langle w, u\times z \rangle v + \langle u, v\times z \rangle w + \langle v, u\times w \rangle z .$$

Using

\begin{equation*}
\langle \chi (u,v,w) , z \rangle=*\varphi  (u,v,w,z)
\end{equation*}
and
\begin{equation*}
\langle \psi (u,v) , w \rangle=\varphi  (u,v,w)=\langle u\times v , w
\rangle
\end{equation*}
we have the following lemma.  Note that given a nonvanishing $2$-frame field $<u,v>$, we can extend it by some nonvanishing vector field $w$.  Then $\tilde{S} := \sigma (u,v,u\times v, w)$ is a nonvanishing vector field on $M$.

\begin{lem} Let $\tilde{S}=\sigma (u,v,u\times v, w)$ for nonvanishing vector fields $u,v,w\in TM$. Then the following properties hold:
\begin{itemize}
\item[(a)]  $<u,v,u\times v,w>$ is an $RS$ $4$-plane field.
\item[(b)] $<u,v,u\times v, \tilde{S}>$ is an $RS$ $4$-plane field. 
\item[(c)] $<\tilde{S}, u\times \tilde{S}, v\times  \tilde{S}, (u\times v)\times  \tilde{S}>$ is a coassociative $4$-plane field.
 \item[(d)] $<u\times  \tilde{S},  v\times \tilde{S}, (u\times v)\times  \tilde{S} >$ is an HL $3$-plane field.
 \item[(e)] $\{u,v,u\times v,  \tilde{S}, u\times  \tilde{S},  v\times  \tilde{S}, (u\times v)\times  \tilde{S}\}$ is an orthonormal frame field.
 
\end{itemize}
\end{lem}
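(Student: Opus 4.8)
The plan is to reduce all five assertions to pointwise linear-algebra statements about the $G_2$ cross product, and to observe that a single identity, $\tilde S=-w$ (after the natural normalization), collapses the list to a handful of structural facts about associative, coassociative, and $HL$ planes.

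First I would normalize. All the conditions appearing in (a)--(e) (being $RS$, coassociative, $HL$, or orthonormal) are $G_2$-invariant pointwise conditions, so it suffices to argue fiberwise, and if convenient in the model $(\R^7,\varphi_0)$. Since $\sigma$ is an alternating $TM$-valued $4$-form and $u,v,u\times v$ already occupy three of its slots, the component of $w$ lying in the plane $\mathbb A=\langle u,v,u\times v\rangle$ contributes nothing to $\tilde S=\sigma(u,v,u\times v,w)$; hence I may assume $\{u,v\}$ is orthonormal and $w$ is a unit vector orthogonal to $\mathbb A$. Note $\mathbb A$ is associative, since $\varphi(u,v,u\times v)=|u\times v|^2=1$ equals the volume of the orthonormal frame $\{u,v,u\times v\}$.

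The crux is to show $\tilde S=-w$. Expanding the defining formula for $\sigma$ gives
\[ \tilde S=\langle v,(u\times v)\times w\rangle\,u+\langle u\times v,u\times w\rangle\,v+\langle u,v\times w\rangle\,(u\times v)+\langle v,u\times(u\times v)\rangle\,w. \]
Using $\varphi(a,b,c)=\langle a\times b,c\rangle$, the identity $x\times(y\times z)=\langle x,z\rangle y-\langle x,y\rangle z-\chi(x,y,z)$, the norm identity $\langle a\times b,a\times c\rangle=|a|^2\langle b,c\rangle-\langle a,b\rangle\langle a,c\rangle$, and the vanishing $\chi|_{\mathbb A}=0$ (associativity of $\mathbb A$), each of the first three coefficients is seen to vanish, while $u\times(u\times v)=-v$ makes the last coefficient $-1$. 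This computation is the main obstacle: it is where the specific algebra of the $G_2$ cross product genuinely enters, and it can be double-checked by evaluating in the standard frame with $u=e_1,v=e_2,w=e_4$, where one finds $\tilde S=-e_4$.

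Granting $\tilde S=-w$, the remaining claims follow structurally. For (a), the $4$-plane $\langle u,v,u\times v,w\rangle$ contains the associative $3$-plane $\mathbb A$; for any fourth vector $z$ one has $\star\varphi(u,v,u\times v,z)=\langle\chi(u,v,u\times v),z\rangle=0$ because $\chi|_{\mathbb A}=0$, so $\star\varphi$ restricts to zero and the plane is $RS$. Claim (b) is then immediate, since $\tilde S=-w$ spans the same $4$-plane. For (c), the span $\langle\tilde S,u\times\tilde S,v\times\tilde S,(u\times v)\times\tilde S\rangle=\langle w,u\times w,v\times w,(u\times v)\times w\rangle$ is exactly the orthogonal complement $\mathbb A^{\perp}$, which is coassociative because $\mathbb A$ is associative. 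For (d), the $3$-plane in question sits inside this coassociative $4$-plane, so $\varphi$ restricts to zero on it and it is $HL$. Finally, for (e) I would verify orthonormality directly: $u\times\tilde S,v\times\tilde S,(u\times v)\times\tilde S$ are unit vectors (by the norm identity, e.g.\ $|u\times\tilde S|^2=|u|^2|\tilde S|^2-\langle u,\tilde S\rangle^2=1$), are orthogonal to $\tilde S$ and to $\mathbb A$, and are mutually orthogonal, so together with $u,v,u\times v,\tilde S$ they form an orthonormal $7$-frame.
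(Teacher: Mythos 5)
Your proposal is correct, and it is genuinely different from what the paper does: the paper gives no argument at all for this lemma, deferring to the analogous $HL$ lemma in [AS1], whereas you supply a self-contained pointwise proof. Your key observation --- that after discarding the component of $w$ in the associative plane $\mathbb{A}=\langle u,v,u\times v\rangle$ (which cannot contribute since $\sigma$ is alternating) and normalizing, one has $\tilde S=\sigma(u,v,u\times v,w)=-w$ --- is right; it checks out both from the coefficient computation you describe and against the paper's coordinate expression for $\sigma$, where $\sigma(e_1,e_2,e_3,e_4)=-e_4$ (the only relevant monomial being the $-e^{1234}$ in the $e_4$-component). Once $\tilde S=-w$ is in hand, (a) follows from $\chi|_{\mathbb{A}}=0$ via $\star\varphi(u,v,u\times v,z)=\langle\chi(u,v,u\times v),z\rangle$, (b) is the same plane, (c) identifies the span with $\mathbb{A}^{\perp}$ (coassociative since $\mathbb{A}$ is associative), and (d) uses that any $3$-plane in a coassociative $4$-plane is $HL$ --- all standard and correctly deployed. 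This buys a transparent structural explanation (everything reduces to the associative/coassociative orthogonal pair $\mathbb{A}$, $\mathbb{A}^{\perp}$) that the paper's citation-by-analogy does not make visible. Two small points you should tighten: the hypotheses as stated in the paper are loose (for (a) and (e) one implicitly needs $u,v$ orthonormal and $w$ a unit vector not in $\mathbb{A}$, which your normalization supplies but which deserves an explicit remark), and in (e) the mutual orthogonality claims are asserted rather than derived --- they do follow from $\langle a\times b,a\times c\rangle=|a|^2\langle b,c\rangle-\langle a,b\rangle\langle a,c\rangle$ and $\langle u\times\tilde S,v\rangle=-\langle u\times v,\tilde S\rangle=0$, so you should record those one-line computations rather than leave them to the reader.
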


The proof of this lemma is similar to that for HL manifolds, which can be found in \cite{AS1}.  
Using these structures we can express the 3-form $\varphi$ as

$$\varphi = u^{\#}\wedge v^{\#}\wedge ((u\times v)^{\#}\times S^{\#}) + v^{\#}\wedge (u\times v)^{\#} \wedge (u\times S)^{\#} + (u\times v)^{\#} \wedge u^{\#} \wedge (v^{\#} \times S^{\#}) $$
$$\;\;\;\;\;\;\; + \;u^{\#} \wedge S^{\#} \wedge (u^{\#} \times S^{\#}) + v^{\#}\wedge S^{\#} \wedge (v^{\#} \times S^{\#})  + (u\times v)^{\#} \wedge S^{\#} \wedge ((u\times v)^{\#}\times S^{\#}) $$
$$ -\; ((u\times v)^{\#}\times S^{\#})\wedge  (u^{\#} \times S^{\#}) \wedge (v^{\#} \times S^{\#})  .$$

\vspace{.1in}

The tangent bundle of an $RS$ manifold decomposes as 
\[ T(RS)=\tilde{T}(RS)\oplus \tilde{S} ,\] where $\tilde{T}(RS)$ is generated by vectors $\tilde{S}\times u$, $\tilde{S}\times v$, $\tilde{S}\times w$. Since 
\[\langle \chi (u,v,w) , z \rangle=*\varphi  (u,v,w,z)=0\]
  for an $RS$ submanifold, its normal bundle $N(RS)$ is isomorphic to $\tilde{T}(RS)$. The bundle-valued 3-form $\chi$ induces this isomorphism. 

\vspace{.1in}

Under these identifications, we can prove the following theorem.

\begin{thm} The space of all infinitesimal deformations of a smooth, compact,
orientable 4-dimensional $RS$ submanifold in a $G_2$
manifold $M$ within the class of $RS$ submanifolds is infinite-dimensional and can be identified with closed differential 3-forms on $RS$.
\label{thm2}
\end{thm}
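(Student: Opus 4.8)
The plan is to mirror the structure of the proof of Theorem \ref{thm1} (the $HL$ case), replacing the calibration $3$-form $\varphi$ with the calibration $4$-form $\star\varphi$ throughout. First I would set up the deformation map $F:\Gamma(N(RS))\to\Lambda^4T^*(RS)$ defined by $F(V)=(\exp_V)^*(\star\varphi|_{RS_V})$, where $\exp_V$ is the normal exponential map sending $RS$ diffeomorphically onto a nearby submanifold $RS_V$. The kernel of $F$ records exactly those normal deformations that preserve the $RS$ condition $\star\varphi|_{RS}=0$, so the task reduces to linearizing $F$ at $0$ and identifying this kernel. The crucial input, already supplied in the excerpt, is the bundle-valued $3$-form $\chi$ inducing an isomorphism between the normal bundle $N(RS)$ and $\tilde T(RS)$, and hence between normal vector fields and differential $2$-forms on $RS$ (via the cross-product/metric pairing, in direct analogy with how $\psi$ identified $N(HL)$ with $1$-forms on $HL$).

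Next I would compute the linearization. As in the $HL$ proof, the derivative is the Lie derivative restricted to $RS$:
\begin{equation*}
dF(0)(V)=\frac{\partial}{\partial t}\bigl[\exp_{tV}^*(\star\varphi)\bigr]\big\vert_{t=0}=\bigl[{\mathcal L}_V(\star\varphi)|_{RS}\bigr].
\end{equation*}
Applying Cartan's formula ${\mathcal L}_V={i_V}d+d\,i_V$ and using that $\star\varphi$ is closed (co-closedness of $\varphi$ on a $G_2$ manifold, i.e.\ $d\star\varphi=0$), the first term drops out and I obtain
\begin{equation*}
dF(0)(V)=d(i_V\star\varphi)|_{RS}.
\end{equation*}
The object $i_V\star\varphi$ restricted to $RS$ should, under the identification of $V$ with a $2$-form $\eta$ on $RS$ furnished by $\chi$, coincide (up to Hodge duality on the $4$-manifold $RS$) with that $2$-form; this is where the $RS$-analogue of the computation $d(i_V\varphi)|_{HL}=d\star v=d^*v$ lives. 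I expect $dF(0)(V)$ to equal $d\eta$ for the associated $2$-form $\eta$, so that the kernel condition $dF(0)(V)=0$ becomes $d\eta=0$, i.e.\ $\eta$ is a closed $2$-form.

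Here, however, the statement claims the deformation space is identified with closed \emph{$3$-forms} on $RS$, not closed $2$-forms, so I must be careful about which space of forms the normal vector fields correspond to. The natural identification via $\chi$ takes a normal field to a $3$-form on $RS$ (since $\chi\in\Omega^3(M,TM)$ restricts to give a $TM$-valued object whose pairing with a tangent direction produces a $3$-covector on the $4$-dimensional $RS$); equivalently, on the $4$-manifold $RS$ the Hodge star exchanges $1$-forms and $3$-forms, and $d$ of a function versus $d$ on forms must be tracked precisely. The main obstacle will be getting this dimension bookkeeping exactly right: verifying that $i_V\star\varphi|_{RS}$ is a genuine $3$-form on $RS$, that its exterior derivative is the correct top-degree obstruction, and that the kernel is therefore the closed $3$-forms rather than closed $2$-forms. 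I would resolve this by computing $i_V\star\varphi$ explicitly in the adapted orthonormal frame $\{u,v,u\times v,\tilde S,u\times\tilde S,v\times\tilde S,(u\times v)\times\tilde S\}$ of Lemma 3.4, using the coordinate expression for $\star\varphi$ and the coassociator $\sigma$, and then checking that closedness of the resulting $RS$-form is both necessary and sufficient for $V$ to lie in the kernel; the infinite-dimensionality is then immediate since closed $3$-forms on a compact $4$-manifold form an infinite-dimensional space.
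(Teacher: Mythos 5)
Your proposal follows the paper's proof essentially verbatim: the same deformation map $F(V)=(\exp_V)^*(\star\varphi)$, the same linearization via Cartan's formula together with $d\star\varphi=0$, and the same conclusion that the kernel is the space of closed $3$-forms on $RS$. The degree bookkeeping you flag is resolved exactly as you suspect: the paper identifies $V$ with its metric-dual $1$-form $v$ and computes $i_V(\star\varphi)|_{RS}=\star v$, the Hodge dual on the $4$-manifold $RS$, which is a $3$-form, so the kernel condition $d\star v=0$ is closedness of a $3$-form (your passing suggestion that the identification should send normal fields to $2$-forms is the one wrong turn, and you correctly abandon it).
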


\begin{proof} As in the proof of Theorem \ref{thm1}, for a small vector field $V$, the deformation map $F$ is defined as
\begin{equation*}
\begin{split}
&F: \Gamma(N(RS))\rightarrow
\Lambda^4T^*(RS)\\
&F(V)=(\exp_V)^*(\star\varphi)
\end{split}
\end{equation*}

In other words, the deformation map $F$ is the restriction of $\star\varphi$
to $RS_V$ and then pulled back to $RS$ via $(\exp_V)^*$ where
$\exp_V$ is the exponential map which gives a diffeomorphism of
$RS$ onto its image $RS_V$ in a neighborhood of 0.  For reasons analogous to those for HL manifolds, the kernel of $F$ corresponds to deformations of $RS$ manifolds.



\vspace{.1in}

Furthermore, we can write the linearization of $F$ at $(0)$ as
\begin{equation*}
dF(0):\Gamma(N(RS))\rightarrow \Lambda^4T^*(RS)
\end{equation*}
\noindent where
\begin{equation*}
\begin{split}
dF(0)(V)=&[-{\mathcal L}_{V}(*\varphi)|_{RS}] \\ 
=& d(i_V(*\varphi)))|_{RS}=d\star v
\end{split}
\end{equation*}

Here again, $i_V$ represents the interior derivative and $v$ is the dual
$1$-form to the vector field $V$ with respect to the induced
metric. Hence
\begin{equation*}
dF(0)(V) =d\star v=\star(d^*v).
\end{equation*}

Therefore the space of infinitesimal deformations of $RS$ manifolds can be identified with closed 3 forms on $RS$.

\end{proof}

\begin{rem}  Recall that for a Lagrangian submanifold $L$ of $(N^{2n}, \omega)$, there exists an almost complex structure $J$ on $N$ such that $J$ maps vectors on $L$ to vectors orthogonal to $L$.  In other words, 
\[ \langle Ju, v \rangle = 0 \]
for all $u, v\in TL$.  As a final remark, we note that we have an analogous situation here for HL and RS manifolds.  In the Harvey-Lawson case, the cross product $\times$ acts as this complex structure.  Further, in the case of RS manifolds, the triple cross product assumes the role of the complex structure, taking tangent vectors on the RS manifold to normal vectors.
\end{rem}

{\small{\it Acknowledgements.} Part of this work was done when the second author was visiting Cornell University as the Ruth I. Michler Fellow during the Spring of 2015. Many thanks to the mathematics department at Cornell for their hospitality and AWM for the support during the course of this work. Special thanks are to Dogan Ertan 
for all his help and encouragement. }

\end{document}